\documentclass[12pt]{article}
\usepackage[T1]{fontenc}
\usepackage{amsmath,amssymb,amsthm}
\usepackage{booktabs}
\usepackage{url}
\usepackage[hidelinks]{hyperref}

\newtheorem{theorem}{Theorem}[section]
\newtheorem{lemma}[theorem]{Lemma}

\newtheorem*{continuationlemma}{Lemma}
\theoremstyle{definition}

\theoremstyle{remark}

\numberwithin{equation}{section}

\newcommand{\supp}{\operatorname{supp}}
\newcommand{\Full}{\mathcal M}
\newcommand{\Shadow}{\mathcal R}
\newcommand{\EW}{\mathrm{EW}}

\title{Surjectivity of Finite Rank-Capped Enots--Wolley Sequences}
\author{Nathan Myles Nichols}
\date{September 2026}

\begin{document}
\maketitle

\begin{abstract}
For each fixed integer $K\ge2$, we consider the Enots--Wolley sequence in
which every term after the initial $1,2$ is required to have between two and
$K$ distinct prime divisors.  We prove that every integer satisfying this
restriction occurs.  If an exact prime support $T$ were selected only
finitely often, then after a finite cutoff the terms meeting $T$ would form
short episodes, and every full term would force an earlier proper term at
comparable numerical height.  The case $|T|=K$ is then ruled out directly.  In
the remaining case $|T|<K$, at a record proper value $H$, greediness forces
every unblocked rank-$K$ integer below $H$ containing exactly one prime of $T$
to have occurred earlier.  Fixed-order Landau estimates show that this proper
population has order $H(\log\log H)^{K-2}/\log H$, while the entire possible
full population on the same scale has strictly smaller logarithmic order.  This
contradiction proves surjectivity.  The theorem concerns each fixed rank cap
and does not settle surjectivity of the unrestricted Enots--Wolley sequence.
\end{abstract}

\section{Introduction}
\label{sec:introduction}

A sequence is \emph{lexicographically earlier} than another if its term at the
first index where they differ is smaller.  A lexicographically earliest
sequence subject to given conditions therefore makes the smallest possible
choice at each index that is compatible with a complete sequence satisfying
those conditions.  When the sequence is required to be infinite, this is not
necessarily the same as taking the smallest unused integer that passes the
conditions visible at the current step: such a choice might prevent the
sequence from continuing.

Lexicographically earliest divisibility sequences provide concrete examples of
this distinction and of the difficulty of determining the range.  The
Yellowstone sequence, OEIS A098550, begins with $1,2,3$ and subsequently
chooses the least unused integer sharing a prime divisor with the term two
places earlier and coprime to its immediate predecessor.  Applegate et al.
proved that it is a permutation of the positive integers
\cite{yellowstone,oeis-yellowstone}.

The Enots--Wolley sequence, introduced by Shannon and Sloane as OEIS A336957,
reverses those two divisibility conditions \cite{oeis-ew}.  It is the
lexicographically earliest infinite sequence of distinct positive integers
beginning with $1,2$ in which each later term shares a prime divisor with its
predecessor and is coprime to the term two places earlier.  Its first terms are
\[
1,2,6,15,35,14,12,33,55,10,18,21,77,\ldots.
\]
For this rule, indefinite continuation is equivalent to requiring each new
term to introduce a prime divisor absent from its predecessor.  For example,
after $1,2$, choosing $4$ would satisfy the two visible divisibility conditions
but leave no possible next term; the continuation condition excludes $4$ and
permits $6$.  Section~\ref{sec:dynamics} records this equivalent local form for
the rank-capped sequences used below.

Every noninitial Enots--Wolley term has at least two distinct prime divisors.
It is conjectured that every such integer occurs in the unrestricted sequence
\cite{oeis-ew}.  There is a related binary sequence, OEIS A338833, in which
prime-support sets are replaced by the positions of the $1$'s in binary
expansions \cite{oeis-binary}.  Nichols's master's thesis proves that every
integer with at least two $1$'s in its binary expansion occurs in that sequence
and develops further properties of the original Enots--Wolley sequence
\cite{nichols-thesis}.  The binary analogue changes the numerical ordering of
supports.  The family studied here instead retains ordinary prime
factorization and numerical order while bounding the number of distinct prime
divisors of each selected term.

Write $\omega(m)$ for the number of distinct prime divisors of $m$.  For an
integer $K\ge2$, let $\EW[2..K]$ be the Enots--Wolley sequence with the additional
restriction
\[
2\le\omega(a_n)\le K\qquad(n\ge3).
\]
The lexicographically earliest rule is applied anew with this restriction; the
sequence is not obtained by deleting terms from unrestricted Enots--Wolley.
The special case $\EW[2..2]$ is recorded as the draft OEIS entry A399694
\cite{oeis-rank2}.

\begin{theorem}[Finite-cap surjectivity]\label{thm:main-intro}
For every fixed integer $K\ge2$, $\EW[2..K]$ is a permutation of
\[
\{1,2\}\cup\{m\ge1:2\le\omega(m)\le K\}.
\]
\end{theorem}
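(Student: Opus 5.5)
The plan follows the outline in the abstract. Throughout, the argument works with exact prime supports. For each admissible $m$ (with $2\le\omega(m)\le K$) let $\supp(m)$ be its set of prime divisors. First I record the local form of the rule. After $a_{n-1},a_n$, the next term is the least unused admissible $m$ such that
\[
\supp(m)\cap\supp(a_n)\neq\emptyset,\qquad \supp(m)\cap\supp(a_{n-1})=\emptyset,\qquad \supp(m)\not\subseteq\supp(a_n).
\]
I will check that the last condition guarantees indefinite continuation under the cap: given a new prime $p$, the number $p\,q$ with $q$ a huge fresh prime is always available next. Because $a_{n+1}$ is then always finite, the sequence is infinite and injective. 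It also follows that a smaller candidate is skipped only if it is used, or blocked by the coprimality condition with $a_{n-1}$, or blocked by the shared-prime condition with $a_n$.

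Next comes a reduction. An admissible $m$ is missing if and only if some support $T$ is realized by only finitely many terms, because all multiples with support $T$ would otherwise eventually be the least available. The fact I would prove is that if some $m$ with support $T$ never occurs, then terms with support exactly $T$ appear only finitely often. The reason is that after $m$ becomes the least unused integer with that support, any term with support $T$ would be chosen at a value at least $m$, contradicting greediness. So assume $T$ is selected finitely often, and fix a cutoff $N$ beyond which no term has support $T$. I call a term \emph{full} if its support meets $T$ and contains a prime outside $T$, and \emph{proper} if it contains exactly one prime of $T$. After $N$, the terms meeting $T$ come in short episodes, since consecutive terms must introduce a new prime and cannot reuse the support of the term two back. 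For each full term $a_n=x$ I will exhibit an earlier proper term of size $\le C x$, obtained by replacing $T$-primes with small primes. This uses greediness: at the entry step the candidate built from the entering prime was smaller, so it must already have been used.

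Case $|T|=K$. Here $m$ itself has $K$ primes, all of them in $T$. At an entry step into an episode containing a prime $p\in T$, the candidate $m$ is admissible: its support shares $p$ with the predecessor, and one checks that it is coprime to the term two back and brings a new prime. It is also smaller than the chosen term once those terms exceed $m$, which happens after finitely many steps. So $m$ would be chosen, a contradiction. The remaining points are that episodes recur infinitely often and that the entry conditions hold. Recurrence comes from the fact that each small prime must reappear, which I would prove by a similar greedy argument. Coprimality with the term two back can be arranged by choosing an entry where the predecessor's other primes avoid $T$.

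Case $|T|<K$. This is the main obstacle. I take a record proper value $H$. Greediness at that step says that every rank-$K$ integer $\le H$ containing exactly one prime of $T$ and unblocked by the two neighbours has already appeared. By Landau's estimate $\#\{n\le x:\omega(n)=k\}\sim \frac{x(\log\log x)^{k-1}}{(k-1)!\log x}$, where fixing one prime from $T$ costs a factor of order one and leaves $K-1$ free primes, this population has size $\gg H(\log\log H)^{K-2}/\log H$. Each such proper term must be matched to a full term of comparable height by the previous step's injection, run in reverse. Full terms up to $CH$ contain at least two primes of $T$ or are forced into a thinner set. The delicate part is bounding the total full population at scale $H$ by $o\bigl(H(\log\log H)^{K-2}/\log H\bigr)$, using that full terms must lie in episodes whose count is controlled by the small fixed set $T$. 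Showing that the proper terms really are charged to distinct full terms is where I expect the hardest bookkeeping.
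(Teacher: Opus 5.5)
Your outline has the right overall shape, but there is a genuine gap: it never establishes \emph{eventual full return}, the structural lemma that every later step depends on. Once $T$ is selected only finitely often, finite loss shows that $T$ is eventually never locally admissible. The reason is that the least unused member of $\mathcal Q_T$ is a candidate whenever $T$ is admissible, and it can lose only finitely often. Now suppose a $T$-free term $B$ is followed by a term $F$ that meets $T$ but omits some prime of $T$. At the next step $T$ retains through $F$, avoids $B$, and has novelty, so it would be admissible. Hence after a cutoff every entry into $T$ contains \emph{all} of $T$.

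Your stated reason for short episodes (novelty plus lag-two disjointness) does not give this. The run $t_1x_1,\ x_1t_2,\ t_2x_2,\ x_2t_1,\ t_1x_3,\ldots$ satisfies all three local conditions and meets $T$ indefinitely. Without full return, several things break:
\begin{itemize}
\item Your definitions of full and proper overlap: a term with one target prime and one outside prime is both.
\item There is no injection of proper terms into full terms.
\item The predecessor of a proper record need not contain any target prime. Your shadow candidates then need not satisfy retention, and the lower bound $\gg H(\log\log H)^{K-2}/\log H$ is unsupported.
\item Decisively, your full population is not lower order, since integers with exactly one target prime are as numerous as the shadow itself.
\end{itemize}
With full meaning $T\subseteq\supp$ and $|T|\ge2$, the entire full population below $pH$ is $O\bigl(H(\log\log H)^{K-d-1}/\log H\bigr)$, one $\log\log$ factor smaller. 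The resulting episode grammar (length at most two, full starter, at most one proper second term) makes $N_{\mathrm{prop}}\le N_{\mathrm{full}}$ immediate. So the bookkeeping you expect to be hardest is trivial once full return is in place.

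Second, your replacement step bounds the wrong quantity. An earlier proper term of size at most $Cx$ is useless. What is needed is an earlier proper term of size greater than $x/C$, so that every full term selected so far lies below $C$ times the running proper maximum. Naively replacing target primes by small primes can lose arbitrarily much height. The paper instead takes $w(v)=p^jz(v)$, where $p^j$ is the largest power of $p=\min T$ below the target part $c_T(v)$. This gives $v/p<w(v)<v$ together with admissibility at the selection of $v$.

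The map $v\mapsto w(v)$ is not injective and is never run in reverse. Counts come from the episode pairing, heights come from this envelope, and the two are synchronized at proper-value records, where
\[
R_{K,T}(H;B)\le C_T+N_{\mathrm{prop}}(n)\le C_T+N_{\mathrm{full}}(n)\le C_T+\Full_{K,T}(pH).
\]
Two further points remain. You need the no-finite-cover lemma to guarantee infinitely many entries from $T$-free terms, which your $|T|=K$ case also uses. And the shadow asymptotic must be uniform over the moving two-back support $B$ with $|B|\le K$.
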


The qualification that $K$ is fixed is essential to the conclusion proved
here.  An unrestricted sequence and its rank-capped counterpart agree only
until the cap first excludes the unrestricted choice.  Although every fixed
allowed integer occurs for every sufficiently large fixed cap, its occurrence
index may depend on the cap.  The theorem gives no bound on these indices
uniform in $K$ and therefore does not imply the unrestricted conjecture.

\section{Outline of the proof}
\label{sec:proof-outline}

The first reduction is from integers to exact prime supports.  All integers
with the same support satisfy the same local conditions, so the lexicographically
earliest rule services each exact-support queue in increasing numerical order.
Section~\ref{sec:defective} calls a support \emph{saturated} when its entire
queue occurs and writes $\operatorname{Sat}(S)$ for this property.  By FIFO,
$\operatorname{Sat}(S)$ is equivalent to infinitely many services of $S$, so
surjectivity is equivalent to proving $\operatorname{Sat}(S)$ for every allowed
support.  Section~\ref{sec:dynamics} supplies the elementary ingredients needed
for this reduction: finite loss, recurrence of every prime, and the
impossibility of an eventual cover by any fixed finite set of primes.

Assume then that $\operatorname{Sat}(T)$ fails for some allowed support $T$.
Section~\ref{sec:defective} first proves \emph{eventual full return}: after a
finite cutoff, every transition from a term disjoint from $T$ back to a term
meeting $T$ must land in a term containing all of $T$.  After choosing such a
cutoff at a target-free term, the terms meeting $T$ occur in short
\emph{target episodes}.  Every episode begins full, has length at most two, and
contains at most one proper term.  Thus the number of proper terms after the
cutoff is at most the number of full terms there.  If $|T|=K$, full return is
already impossible, so the remaining case has $2\le |T|<K$.

Section~\ref{sec:envelope} puts the full and proper histories on a common
numerical scale.  Let $p=\min T$.  From every full value $v$ occurring after
the cutoff we construct an earlier proper value $w$ satisfying
\[
\frac vp<w<v.
\]
Consequently, if $H$ is the largest proper value seen by a given time, every
such full value seen by that time is below $pH$.  Hence the total number of full
terms is bounded by the complete arithmetic population of integers below $pH$
whose support contains $T$.

The decisive observation is made when the proper maximum increases.  Suppose
that $H$ is a new numerical proper-value record.  Its predecessor is full and
its two-back term is target-free.  Section~\ref{sec:record-shadow} shows that
every rank-$K$ integer below $H$ which contains exactly one prime of $T$ and
avoids the two-back support is an admissible smaller candidate, and therefore
must already have appeared.  These integers form a literal disjoint historical
population; no weighting or overlap correction is required.  Comparing this
population with the episode bound and the full-value envelope yields a fixed
upper bound on the difference between the proper record shadow and the entire
possible full population below $pH$.

Section~\ref{sec:counting} proves that this fixed upper bound is impossible for
large records.  Landau's fixed-order estimates imply that the record shadow has
order
\[
\frac{H(\log\log H)^{K-2}}{\log H},
\]
uniformly over the at-most-$K$ primes in the moving two-back support.  The
complete population of full integers below $pH$ has order
\[
\frac{H(\log\log H)^{K-|T|-1}}{\log H}.
\]
Since $|T|\ge2$, the latter is lower order.  Their difference therefore tends
to infinity along proper records, contradicting the fixed upper bound.
Section~\ref{sec:finish} concludes that no allowed support can be nonsaturated,
and hence every allowed exact-support queue is exhausted.

\section{Preliminaries and elementary dynamics}
\label{sec:dynamics}

For a positive integer $m$, let
\[
\supp(m)=\{\ell:\ell\text{ prime and }\ell\mid m\},
\qquad
\omega(m)=|\supp(m)|.
\]
We call $\omega(m)$ the \emph{rank} of $m$; thus a rank-$j$ integer is one
with exactly $j$ distinct prime divisors.  Fix $K\ge2$, and put
\[
\mathcal A_K=\{m\ge1:2\le\omega(m)\le K\}.
\]
The rank-capped sequence $(a_n)_{n\ge1}$ is defined by
\[
a_1=1,\qquad a_2=2,
\]
and, for $n\ge3$, by taking $a_n$ to be the least positive integer which has
not appeared earlier, belongs to $\mathcal A_K$, and satisfies
\begin{align}
\supp(a_n)\cap \supp(a_{n-1})&\ne\varnothing,
\label{eq:retention}\\
\supp(a_n)\cap \supp(a_{n-2})&=\varnothing,
\label{eq:lag-two}\\
\supp(a_n)\setminus \supp(a_{n-1})&\ne\varnothing.
\label{eq:novelty}
\end{align}
We refer to these as retention, lag-two disjointness, and novelty.

A prime support $S$ with $2\le |S|\le K$ is \emph{locally admissible} at a
selection if it satisfies the three support conditions corresponding to
\eqref{eq:retention}--\eqref{eq:novelty}.  Since these conditions depend only
on $S$, either every unused integer with exact support $S$ is locally
admissible or none is.  Write
\[
\mathcal Q_S=\{m\ge1:\supp(m)=S\}
\]
for the \emph{exact-support queue} of $S$, ordered increasingly.

The following observation makes precise why the local novelty condition is
exactly the condition needed in the lexicographically earliest formulation.

\begin{continuationlemma}[Infinite continuation criterion]
Let $a_1,\ldots,a_n$ be a finite admissible prefix of the rank-capped problem:
it begins with the prescribed terms $1,2$, every later term lies in
$\mathcal A_K$, the terms are distinct, and the two Enots--Wolley divisibility
conditions hold at every applicable index.  Let $b$ be an unused member of
$\mathcal A_K$ satisfying retention with $a_n$ and lag-two disjointness from
$a_{n-1}$.  Then the extended prefix
\[
a_1,\ldots,a_n,b
\]
admits an infinite continuation satisfying the same conditions if and only if
\[
\supp(b)\setminus\supp(a_n)\ne\varnothing.
\]
\end{continuationlemma}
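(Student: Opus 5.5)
We prove the two directions separately, with a sufficiency argument that builds an explicit continuation step by step.

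\emph{Necessity.} Suppose $\supp(b)\subseteq\supp(a_n)$, and suppose some continuation $a_1,\ldots,a_n,b,c$ exists. Retention at $c$ requires $\supp(c)\cap\supp(b)\neq\varnothing$. Lag-two disjointness requires $\supp(c)\cap\supp(a_n)=\varnothing$. Since $\supp(b)\subseteq\supp(a_n)$, the second condition forces $\supp(c)\cap\supp(b)=\varnothing$, contradicting the first. So no next term exists, and in particular no infinite continuation. The same argument applies verbatim when $n=2$: here $a_n=2$ and $b$ has rank at least $2$, so novelty holds automatically and the obstruction never arises.

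\emph{Sufficiency.} Assume $\supp(b)\setminus\supp(a_n)\neq\varnothing$ and fix a prime $q\in\supp(b)\setminus\supp(a_n)$. It suffices to prove the following invariant: whenever the current last two terms $x,y$ satisfy $\supp(y)\setminus\supp(x)\neq\varnothing$, there is an unused $z\in\mathcal A_K$ satisfying retention with $y$, disjointness from $x$, and novelty relative to $y$. Iterating this invariant, choosing such a $z$ at every step, produces an infinite continuation, since each new pair $(y,z)$ again satisfies the hypothesis.

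To construct $z$, take $q\in\supp(y)\setminus\supp(x)$ and a prime $r\notin\supp(x)\cup\supp(y)$, and set $z=q^e r$. Such an $r$ exists because only finitely many primes are excluded. Then $\omega(z)=2\le K$, so $z\in\mathcal A_K$. Retention holds through $q$, and disjointness from $x$ holds because neither $q$ nor $r$ divides $x$. Novelty holds through $r\notin\supp(y)$.

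The only remaining point, and the only place needing any care, is distinctness: $z$ must be unused. We ensure this by choosing $r$ larger than every prime dividing any term used so far, or simply larger than every used value. Finitely many terms have been used at each stage, so such an $r$ exists and we may take $e=1$. Then $z=qr$ is new because it exceeds all previous terms.

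With this choice the construction applies at every stage, starting from the pair $(a_n,b)$, and yields the required infinite continuation.
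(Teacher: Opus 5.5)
Your proof is correct and follows the paper's argument. Necessity is the same observation, that a successor retaining a prime of $b$ must meet the two-back term $a_n$. For sufficiency, your one-step invariant with $r$ chosen above all used values, iterated, produces exactly the paper's explicit chain $qr_1, r_1r_2, r_2r_3,\ldots$ of distinct rank-two terms on fresh primes.
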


\begin{proof}
If $\supp(b)\subseteq\supp(a_n)$, then every possible successor sharing a
prime with $b$ also shares a prime with the two-back term $a_n$.  Retention and
lag-two disjointness therefore cannot both hold, so no continuation exists.

Conversely, choose
\[
r\in\supp(b)\setminus\supp(a_n).
\]
Choose distinct primes $Q_1,Q_2,\ldots$ which occur in none of
$a_1,\ldots,a_n,b$ and are different from $r$.  Append
\[
rQ_1,\quad Q_1Q_2,\quad Q_2Q_3,\quad\ldots.
\]
The first appended term retains $r$ from $b$, avoids $a_n$, and introduces
$Q_1$.  The next term $Q_1Q_2$ is disjoint from $b$ because both $Q_1$ and
$Q_2$ were chosen outside $\supp(b)$.  Thereafter each term retains the newest
prime from its predecessor, avoids the term two places back, and introduces a
fresh prime.  All appended terms have rank two and are distinct.  Hence this
gives an infinite continuation.
\end{proof}

Thus, among unused allowed integers satisfying retention and lag-two
disjointness, the extendible choices are exactly those satisfying novelty.
The local rule above is therefore precisely the lexicographically earliest
infinite sequence beginning with $1,2$ and satisfying the rank restriction
together with the two Enots--Wolley divisibility conditions.

The sequence never gets stuck.  At the first greedy selection, $2Q$ is a
candidate for any prime $Q\ne2$.  Thereafter novelty of the preceding term
supplies an active prime
\[
r\in \supp(a_{n-1})\setminus \supp(a_{n-2}),
\]
and pairing $r$ with any globally unseen prime $Q$ gives an unused rank-two
candidate $rQ$.

\begin{lemma}[FIFO service and finite loss]\label{lem:fifo-loss}
Let $S$ be an allowed support.
\begin{enumerate}
\item The selected members of $\mathcal Q_S$ occur in increasing numerical
order.  In particular, if $S$ is selected infinitely often, then every member
of $\mathcal Q_S$ is eventually selected.
\item If $S$ is selected only finitely often, then after some finite time it is
never locally admissible.
\end{enumerate}
\end{lemma}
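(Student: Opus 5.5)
Both parts come from one observation: at every greedy step, local admissibility is a property of the support alone. The plan is to handle part 1 by a direct exchange argument and part 2 by contrapositive.

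For part 1, suppose $m<m'$ are both in $\mathcal Q_S$ and $m'$ is selected at index $n$ while $m$ is still unused. Because $\supp(m)=\supp(m')=S$, retention with $a_{n-1}$, lag-two disjointness from $a_{n-2}$, and novelty are conditions on $S$ only. So they hold for $m$ exactly as they hold for $m'$. Also $m\in\mathcal A_K$ and $m$ is unused, so $m$ is a legal candidate at step $n$. It is smaller than $m'$, which contradicts the minimality in the definition of $a_n$. Hence the selected members of $\mathcal Q_S$ occur in increasing order.

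For the ``in particular'' clause, suppose $S$ is selected infinitely often. The selected members are distinct and increasing, so they are unbounded. Take any $m\in\mathcal Q_S$ and choose a selected member $m'>m$, selected at index $n$. By the exchange argument, $m$ cannot still be unused at time $n$. Therefore $m$ was selected earlier.

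For part 2, suppose $S$ is selected only finitely often. Then only finitely many members of $\mathcal Q_S$ are ever used. The queue $\mathcal Q_S$ is infinite, since it contains every product of powers of the primes in $S$ with all exponents at least $1$. So from some point on there is a fixed smallest unused element $m_0\in\mathcal Q_S$, and it is never selected afterwards. The set of integers smaller than $m_0$ is finite and each integer appears at most once, so after some finite time $N$ every integer below $m_0$ is either already used or will never be used. Any $n>N$ must also exceed the last selection index of $S$.

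Now suppose $S$ were locally admissible at some step $n>N$. Then $m_0$ is an unused legal candidate at step $n$. The chosen term $a_n$ is not in $\mathcal Q_S$, so $a_n\ne m_0$, and by minimality $a_n<m_0$. Thus $a_n$ is a new term below $m_0$, which is impossible after time $N$. The only delicate point is this pigeonhole step, namely that only finitely many fresh values below $m_0$ can ever be selected; once that is fixed, the rest is immediate.
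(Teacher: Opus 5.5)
Your proof is correct and takes essentially the same approach as the paper. Part 1 rests on the observation that the least unused member of $\mathcal Q_S$ is a candidate whenever $S$ is locally admissible; part 2 fixes the least unused member ($m_0$, the paper's $h$) after service stops and notes that each later admissible occasion forces a new winner below it, which can happen only finitely often. The only difference is that you spell out the ``in particular'' clause and the pigeonhole step more explicitly than the paper does.
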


\begin{proof}
Whenever $S$ is locally admissible, the least unused member of $\mathcal Q_S$
is a candidate, so services of $S$ occur in increasing order.  This proves the
first assertion.

For the second, suppose service stops and let $h$ be the resulting least unused
member of $\mathcal Q_S$.  On every later occasion when $S$ is locally
admissible, $h$ is a candidate.  If it loses, the selected winner is a new
integer strictly below $h$.  Since there are only finitely many such integers,
this can happen only finitely often.
\end{proof}

We next record the two recurrence facts used in the main argument.

\begin{lemma}[Prime debuts are exact pairs]\label{lem:debut-pair}
If a globally new prime $Q$ first occurs in a selected term $V$, then
\[
V=cQ
\]
for a unique prime $c$ retained from the predecessor.
\end{lemma}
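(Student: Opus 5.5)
The plan is a minimality argument against the greedy rule: exhibit a smaller admissible candidate whenever $V$ is not of the form $cQ$.

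Let $V=a_n$ with $n\ge3$, where $Q\mid V$ and $Q$ divides no earlier term. Then $Q\notin\supp(a_{n-1})$, and retention gives a prime $c\in\supp(V)\cap\supp(a_{n-1})$; in particular $c\ne Q$. Set $V'=cQ$ and check that $V'$ is a legal candidate at step $n$:
\begin{itemize}
\item $\omega(V')=2$, so $V'\in\mathcal A_K$.
\item Retention holds through $c$.
\item Lag-two disjointness holds because $\supp(V')\subseteq\supp(V)$ and $V$ avoids $\supp(a_{n-2})$.
\item Novelty holds because $Q\notin\supp(a_{n-1})$.
\item $V'$ is unused: any earlier term equal to $cQ$ would contain $Q$, contradicting that $Q$ is new.
\end{itemize}
Since $V'\mid V$, we have $V'\le V$. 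By the least-choice rule, $V\le V'$, so $V=V'=cQ$.

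It follows that $\supp(V)=\{c,Q\}$. So $c$ is the unique prime of $V$ other than $Q$, and it is retained from the predecessor. This gives both the factorization and the uniqueness of $c$.

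Two edge cases need a quick check. First, the initial terms: the prime $2$ debuts in $a_2=2$, which is prescribed rather than selected, so the lemma should be read for greedy selections $n\ge3$. I would state that restriction explicitly. Second, the exponent of $c$: minimality already forces $V=cQ$ exactly, not $c^aQ^b$, because $cQ$ itself is the smaller candidate.

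I do not expect any real obstacle. The only point needing care is confirming that $cQ$ is unused, which is immediate from $Q$ being globally new.
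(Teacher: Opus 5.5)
Your proposal is correct and is essentially the paper's own argument: take a prime $c$ retained from the predecessor, check that $cQ$ is an unused rank-two candidate satisfying retention, lag-two disjointness and novelty, and combine greedy minimality $V\le cQ$ with $cQ\mid V$ to get $V=cQ$. Your note that the lemma concerns greedy selections $n\ge3$, so that the prescribed $a_2=2$ is excluded, matches the paper's wording ``selected term.''
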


\begin{proof}
Choose $c\in \supp(V)\cap \supp(a_{n-1})$.  Because $V$ satisfies lag-two
disjointness, $c\notin \supp(a_{n-2})$.  Since $Q$ is globally new, the integer
$cQ$ is unused; it retains $c$, avoids $a_{n-2}$, introduces $Q$, and has rank
two.  Thus $cQ$ is a candidate at the same selection, so greediness gives
$V\le cQ$.  Conversely, $cQ\mid V$, hence $cQ\le V$.  Therefore $V=cQ$; in
particular, $c$ is the unique old prime dividing $V$.
\end{proof}

\begin{theorem}[Prime recurrence]\label{thm:prime-recurrence}
Every prime occurs in infinitely many selected terms.
\end{theorem}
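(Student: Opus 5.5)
The plan is to argue by contradiction. Suppose some prime $q$ occurs in only finitely many selected terms, and fix $N$ so that $q\nmid a_n$ for all $n\ge N$. Put
\[
F_0=\{r \text{ prime}: qr \text{ occurs among } a_1,\ldots,a_N\},
\]
which is a finite set. The tool is a family of cheap competitors built from $q$. Take $n>N+2$ and let $r$ be an \emph{active} prime of $a_{n-1}$, that is, $r\in\supp(a_{n-1})\setminus\supp(a_{n-2})$; such an $r$ exists by novelty at index $n-1$. I check that the integer $qr$ satisfies all three local conditions.
\begin{itemize}
\item Retention holds through $r$.
\item Lag-two disjointness holds because $r\notin\supp(a_{n-2})$ and $q\nmid a_{n-2}$.
\item Novelty holds because $q\nmid a_{n-1}$.
\end{itemize}
It also has rank two. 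It is unused precisely when $r\notin F_0$, since $q$ has not appeared after time $N$. Greediness then gives
\begin{equation}
a_n\le qr\qquad\text{whenever } r \text{ is active in } a_{n-1} \text{ and } r\notin F_0 .
\label{eq:plan-q-competitor}
\end{equation}

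\textbf{Step 1.} Since the terms are distinct, $a_n\to\infty$. By \eqref{eq:plan-q-competitor}, every prime outside $F_0$ is therefore active only finitely often. Equivalently, for every bound $M$, all sufficiently late active primes lie in $F_0\cup(M,\infty)$.

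\textbf{Step 2.} I apply this at prime debuts, using Lemma~\ref{lem:debut-pair}. Suppose a new prime $Q$ debuts at index $n$, so that $a_n=cQ$ with $c$ retained from $a_{n-1}$. The proof of that lemma shows $c\notin\supp(a_{n-2})$, so $c$ is itself active in $a_{n-1}$. If $c\notin F_0$, then \eqref{eq:plan-q-competitor} gives $cQ\le qc$, that is, $Q\le q$. This can happen for at most finitely many debuts. Hence every sufficiently late debut has the form $cQ$ with $c\in F_0$.

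\textbf{Step 3.} I next examine the term following such a debut. Its two-back term $a_{n-1}$ contains $c$, so $a_{n+1}$ cannot retain $c$. It must therefore retain $Q$, and $Q$ is active in $a_n$. Since $Q$ is new, $Q\notin F_0$, and \eqref{eq:plan-q-competitor} gives $a_{n+1}\le qQ$. Moreover, $a_{n+1}=Q\,m$ with $m>1$, $\supp(m)$ disjoint from $\supp(a_{n-1})$, and $m\le q$. The prime factors of $m$ are thus confined to primes up to $q$. I then want to iterate along the run that starts at a debut, showing that the sequence is repeatedly forced back to small primes. Combined with $F_0$ being finite and with pigeonhole on the finitely many supports built from primes $\le\max(F_0\cup\{q\})$, this should force some such small-prime combination $qr$, with $r$ small and outside $F_0$, to be active infinitely often, contradicting Step~1.

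\textbf{Step 4.} The remaining scenario is that only finitely many debuts occur. Then from some point on all terms are supported in a fixed finite set $P$ of primes, and I must show this is impossible. I would treat this as the ``no eventual finite cover'' fact announced in the outline. The argument I would try first: take $Q$ the least prime outside $P\cup\{q\}$. At every late step an active prime $r\in P$ exists, and $rQ$ is an unused candidate, so $a_n\le Q\max P$ for all large $n$. This bounds infinitely many distinct terms, which is a contradiction.

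The main obstacle is Step~3: converting the local inequalities at debuts into a contradiction when the retained primes $c$ lie in the finite exceptional set $F_0$. If the iteration does not close cleanly, my fallback is to run the Step~4 bounding argument with $Q$ replaced by $q$ itself. Whenever a late active prime lies outside $F_0$, \eqref{eq:plan-q-competitor} already bounds the term by $q$ times that prime, and I would try to show that active primes lying in $F_0$ cannot persist forever.
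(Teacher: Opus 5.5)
Steps 1 and 2 are correct. Step 4 is also correct once $P$ is taken to be the set of \emph{all} primes that ever occur, which is finite when there are only finitely many debuts; this guarantees that your $Q$ is genuinely unused. The configuration you reach in Step 3 is exactly the packet the paper uses: a late debut $a_n=cQ$ with $Q>q$, followed by $a_{n+1}=Qm$ with $1<m<q$. But Step 3 is where the contradiction has to come from, and what you offer there (``iterate along the run'', ``pigeonhole \ldots\ should force'') is not an argument. Worse, your tool cannot supply it. Each prime $u\mid m$ is active in $a_{n+1}$: $u\ne Q$ because $u<q<Q$, and $u\ne c$ because $a_{n+1}$ is disjoint from $a_{n-1}\ni c$. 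Since $u<q$, however, Step 1 only tells you that $u\in F_0$ for late $n$. For such $u$ the integer $qu$ was already used before time $N$, so the bound $a_{n+2}\le qu$ is unavailable. Nothing in your plan excludes the scenario in which every small active prime lies in $F_0$. So there is no source for the ``$qr$ with $r$ small and outside $F_0$ active infinitely often'' you hope to extract, and your fallback (showing that active primes in $F_0$ cannot persist) is the same unresolved problem.

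The missing idea is to replace the single competitor $qr$ by the least unused member $h_{q,r}$ of the whole exact-support queue $\mathcal Q_{\{q,r\}}$. Every member of this queue is divisible by $q$, so none is selected after time $N$, and $h_{q,r}$ is constant for $n>N$. Your checks of retention, lag-two disjointness and novelty depend only on the support $\{q,r\}$, so they apply verbatim. This gives $a_n<h_{q,r}$ whenever a prime $r\ne q$ is active in $a_{n-1}$ with $n$ late, and the exceptional set $F_0$ disappears.

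Now close the argument at the term after $a_{n+1}$. For $u\mid m$ as above, the support $\{q,u\}$ retains $u$ from $a_{n+1}$ and introduces $q$. It also avoids $a_n=cQ$, since $c\ne q$ (as $c$ occurs after time $N$). Hence $a_{n+2}<h_{q,u}$, where $u$ is one of the finitely many primes below $q$. Your Step 4 shows there are infinitely many late debuts, so infinitely many distinct terms $a_{n+2}$ lie below $\max_{u<q}h_{q,u}$, a contradiction. When $q=2$, the condition $1<m<2$ is already impossible. This is precisely the paper's proof, which runs through the packet $A\to cQ\to B\to C$ and the frozen queue minimum $h_{u,p}$.
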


\begin{proof}
First observe that infinitely many primes debut.  Otherwise let $F$ be the
finite set of all primes that ever appear in the sequence, and choose a prime
$Q\notin F$.  Then $Q$ is globally unseen.  At every subsequent selection, an
active prime $r\in F$ gives the unused candidate $rQ$, so every subsequent
winner is at most $Q\max F$.  This is impossible because the sequence is
injective and has infinitely many subsequent terms.

Now fix a prime $p$ and suppose, toward a contradiction, that there is an index
$N_p$ such that
\[
p\nmid a_n\qquad(n\ge N_p).
\]
Choose a debut prime $Q>p$ at an index large enough that the entire packet
\[
A\longrightarrow cQ\longrightarrow B\longrightarrow C
\]
occurs at indices at least $N_p$.  At the selection of $B$, the prime $c$ is
blocked by the two-back term $A$, so retention from $cQ$ must occur through
$Q$; hence $Q\mid B$.  The integer $Qp$ is unused and is also a candidate: it
retains $Q$, avoids $A$, and introduces $p$.  Since $p$ does not occur at or
after $N_p$, equality cannot occur, and therefore $B<Qp$.  Writing $B=Qm$, we
obtain
\[
1<m<p.
\]
If $p=2$, this is already impossible.  Otherwise choose a prime divisor
$u\mid m$.  Then $u<p<Q$, and $u\ne c$: the prime $c$ divides $A$, while $B$
is disjoint from $A$.  Also $c\ne p$, because $c\mid A$ and $A$ occurs at an
index at least $N_p$.  Hence the pair $\{u,p\}$ is disjoint from the support of
$cQ$.

At the selection of $C$, the pair support $\{u,p\}$ retains $u$ from $B$,
avoids the two-back term $cQ$, and introduces $p$.  Because $p$ never appears
at or after $N_p$, the least unused member $h_{u,p}$ of this pair queue is fixed
from that index onward.  It is a candidate at this selection, while the actual
winner $C$ does not contain $p$.  Hence
\[
C<h_{u,p}.
\]
There are only finitely many possible primes $u<p$.  Passing to an infinite
subsequence of debut packets, we may therefore fix $u=u_0$.  The associated
selected values $C$ are distinct by injectivity and all satisfy
$C<h_{u_0,p}$, a contradiction.
\end{proof}

\begin{lemma}[No finite eventual prime cover]\label{lem:no-finite-cover}
For every finite set $T$ of primes, infinitely many selected terms are disjoint
from $T$.
\end{lemma}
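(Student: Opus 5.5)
We argue by contradiction, in the same style as the proof of Theorem~\ref{thm:prime-recurrence}. Suppose there is an index $N$ such that every $a_n$ with $n\ge N$ meets the finite set $T$. The plan is to use debut packets of fresh large primes, available because infinitely many primes debut (the first paragraph of the proof of Theorem~\ref{thm:prime-recurrence}), together with Lemma~\ref{lem:debut-pair}. We want to force a selected term into a bounded finite set of values infinitely often, which contradicts injectivity.

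\emph{Setting up the packet.} Take a debut prime $Q>\max T$ whose packet
\[
A\longrightarrow cQ\longrightarrow B\longrightarrow C
\]
lies beyond $N$. By Lemma~\ref{lem:debut-pair} the debut term is $cQ$ with $c$ prime. Since $cQ$ must meet $T$ and $Q\notin T$, we have $c\in T$. At the selection of $B$, the prime $c$ is blocked by $A$, so $Q\mid B$. Write $B=Qm$. The term $B$ must also meet $T$, and $c\nmid B$, so $m$ has a prime divisor $t\in T\setminus\{c\}$.

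\emph{A fixed competitor.} Choose a prime $z\notin T$ with $z$ different from $c$ and $Q$; say $z$ is the least prime outside $T\cup\{c\}$, which is fixed independently of the packet. Then $Qz$ is a candidate at the selection of $B$ provided it is unused and $z\notin\supp(A)$. It retains $Q$, is new because $Q$ is fresh, and introduces $z$. If $z$ is blocked by $A$, replace $z$ by any prime outside $T\cup\supp(A)\cup\{c\}$ below a bound depending only on $K$ and $T$. Such a prime exists because $|\supp(A)|\le K$, so among the first $K+|T|+2$ primes one works; call this bound $z^\ast$. Greediness gives $B\le Qz$, so $m\le z^\ast$. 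Hence $m$ ranges over a finite set.

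\emph{Forcing a bounded term.} At the selection of $C$, consider the support $\{t',y\}$, where $t'\mid m$ with $t'\notin\supp(cQ)$ (we can take $t'=t$, since $t\ne c$) and $y$ is a prime outside $\supp(B)\cup\supp(cQ)$. We want $C$ to avoid $T$ to get a contradiction, but of course $C$ meets $T$, so this line needs adjusting. The better target is to show directly that a packet term is bounded: $C$ must avoid $cQ$, hence contains no $Q$, and must retain a prime of $m$, which is bounded by $z^\ast$. The candidate $t\,y$, with $y$ the least unused suitable prime, gives $C<$ the least unused member of the queue $\{t,y\}$.

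\emph{Main obstacle.} The difficulty is that these least unused members may grow, since the queues $\{t,y\}$ can be serviced. I would first try to bound $C$ by a candidate built from a globally unseen prime $Q'$, namely $tQ'$. That bound grows too, so instead I would exploit that $C$ is prime-free of $Q$ and must introduce a prime outside $\supp(B)$. Iterating the argument one step further (the term after $C$ must avoid $B\ni Q$) shows $Q$ never recurs after its packet unless it is retained. This suggests the cleanest route is different: apply Theorem~\ref{thm:prime-recurrence} to $Q$ itself. Each later occurrence of $Q$ meets $T$, and a term containing $Q$ with bounded cofactor is bounded by $Q$ times a constant. I expect the real proof to combine the bound $m\le z^\ast$ with a pigeonhole over the finitely many pairs $(c,t)$ and the finitely many possible $m$. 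It would then derive a contradiction from the fixed queue structure, as in the recurrence proof. Making that final pigeonhole step rigorous is where I expect most of the work.
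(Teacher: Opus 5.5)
Your proposal never reaches a contradiction. The debut-packet route is a dead end rather than merely unfinished.

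The only bound you obtain, $B\le Qz^\ast$, is relative to the varying prime $Q$. So no selected term is confined to a fixed finite set. The queues $\{t,y\}$ you invoke at the selection of $C$ meet $T$, so, as you observe, they may be serviced indefinitely.

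In fact the packet is fully compatible with your hypothesis. Every candidate at the selection of $B$ must contain $Q$, avoid $c\mid A$, and introduce a new prime. So it has the form $Q^a m'$ with $a\ge1$ and $m'>1$ coprime to $A$ and to $Q$. Greediness therefore gives exactly $B=Q\ell$, where $\ell$ is the least prime outside $\supp(A)\cup\{Q\}$. (Here $Q\ell$ is unused, since $cQ$ is the only earlier multiple of $Q$.) The cover hypothesis then forces $\ell\in T$.

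Since $Q$ is blocked by $cQ$, every candidate at the selection of $C$ must retain $\ell\in T$. Hence no support disjoint from $T$ is locally admissible there, and the frozen-queue device from the proof of Theorem~\ref{thm:prime-recurrence} has nothing to act on. The fallback via recurrence of $Q$ does not help either, because nothing bounds the cofactors of later terms containing $Q$.

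The missing idea is to see which queues your hypothesis freezes. If every term after $N$ meets $T$, then no support disjoint from $T$ is selected after $N$. By part~2 of Lemma~\ref{lem:fifo-loss}, each such support is eventually never locally admissible. It therefore suffices to show that some member of a fixed finite family of $T$-free supports is locally admissible infinitely often.

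The paper does this without debut packets:
\begin{enumerate}
\item Fix a prime $x\notin T$ and distinct primes $s_1,\dots,s_{2K+1}\notin T\cup\{x\}$.
\item By Theorem~\ref{thm:prime-recurrence}, $x$ occurs infinitely often. Lag-two disjointness limits runs of $x$ to length two, so there are infinitely many run starts $n$ with $x\in\supp(a_n)\setminus\supp(a_{n-1})$.
\item At the selection of $a_{n+1}$, the pair support $\{x,s_i\}$ retains $x$. It satisfies lag-two disjointness and novelty as soon as $s_i\notin\supp(a_{n-1})\cup\supp(a_n)$.
\item That union contains at most $2K$ primes. So for $n$ beyond all $2K+1$ cutoffs, some $\{x,s_i\}$ is locally admissible, a contradiction.
\end{enumerate}
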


\begin{proof}
Suppose instead that there is an index $N_T$ such that
\[
\supp(a_n)\cap T\ne\varnothing
\qquad(n\ge N_T).
\]
Choose a prime $x\notin T$ and distinct primes
\[
s_1,\ldots,s_{2K+1}\notin T\cup\{x\}.
\]
Each pair support $\{x,s_i\}$ is disjoint from $T$, so it can be selected only
finitely often; by Lemma~\ref{lem:fifo-loss}, each is eventually never locally
admissible.

By Theorem~\ref{thm:prime-recurrence}, the prime $x$ occurs infinitely often.
Every consecutive run of $x$-containing terms has length at most two, because
lag-two disjointness forbids $x$ at positions two apart.  Hence infinitely many
occurrences of $x$ force infinitely many run starts, and every run start is an
index $n$ with
\[
x\in \supp(a_n)\setminus \supp(a_{n-1}).
\]
Choose such an $n$ after all pair-support cutoffs.  At the selection of
$a_{n+1}$, the support $\{x,s_i\}$ retains $x$, avoids $a_{n-1}$, and has
novelty exactly when $s_i\notin \supp(a_n)$.  Hence it is locally admissible
whenever
\[
s_i\notin \supp(a_{n-1})\cup \supp(a_n).
\]
The displayed union contains at most $2K$ primes, so it cannot contain all
$2K+1$ choices $s_i$.  At least one forbidden pair support is therefore
locally admissible, a contradiction.
\end{proof}

\section{Nonsaturated supports and full return}
\label{sec:defective}

By Lemma~\ref{lem:fifo-loss}, the selected elements of every allowed exact-
support queue occur in increasing numerical order.  We say that the queue
$\mathcal Q_S$ is \emph{saturated} if every one of its elements appears as a
term of the sequence, and write
\[
\operatorname{Sat}(S)
\]
for the proposition that $\mathcal Q_S$ is saturated.  Since every queue is
infinite, Lemma~\ref{lem:fifo-loss} shows that $\operatorname{Sat}(S)$ is
equivalent to saying that the exact support $S$ is selected infinitely often.
Thus Theorem~\ref{thm:main-intro} is equivalent to proving
$\operatorname{Sat}(S)$ for every allowed support $S$.

Suppose that $\operatorname{Sat}(S)$ fails.  Then $S$ is selected only finitely
often, and Lemma~\ref{lem:fifo-loss} gives an index after which $S$ is never
locally admissible.  This immediately forces a useful return property.

\begin{lemma}[Eventual full return]\label{lem:full-entry}
If $\operatorname{Sat}(S)$ fails, then there is an index after which every
transition
\[
B\longrightarrow F
\]
from a term $B$ satisfying $\supp(B)\cap S=\varnothing$ to a term $F$ satisfying
$\supp(F)\cap S\ne\varnothing$ has
\[
S\subseteq\supp(F).
\]
\end{lemma}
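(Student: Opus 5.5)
By Lemma~\ref{lem:fifo-loss}(2), since $\operatorname{Sat}(S)$ fails there is an index $N_0$ after which $S$ is never locally admissible. The plan is to show that at any late transition $B\to F$ of the stated kind with $S\not\subseteq\supp(F)$, the support $S$ itself would be a locally admissible candidate. For this, I would produce a suitable small modification that contradicts greediness, or argue directly that $S$ satisfies the three support conditions at that selection.

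Fix such a transition at index $n$, so $a_{n-1}=B$ and $a_n=F$, with $n$ large, and consider the \emph{next} selection $a_{n+1}$. Retention for $S$ at that step requires $S\cap\supp(F)\ne\varnothing$, which holds because $F$ meets $S$. Lag-two disjointness requires $S\cap\supp(B)=\varnothing$, which holds by hypothesis. Novelty requires $S\setminus\supp(F)\ne\varnothing$, which is exactly the assumption $S\not\subseteq\supp(F)$. Finally, $S$ is an allowed support, so $2\le|S|\le K$. Hence $S$ is locally admissible at the selection of $a_{n+1}$, whenever $n+1>N_0$.

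So the index claimed in the lemma can be taken to be $N_0$: any transition $B\to F$ with $F$ at index $\ge N_0$ and $F$ meeting $S$ but not containing it would make $S$ locally admissible at index $n+1>N_0$, which is a contradiction. The only care needed is the indexing: local admissibility concerns the selection following $F$, where $F$ is the predecessor and $B$ the two-back term. No real obstacle is expected; the content is just matching the three support conditions to the three hypotheses.
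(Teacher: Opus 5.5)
Your proposal is correct and is essentially the paper's proof: take the cutoff from Lemma~\ref{lem:fifo-loss}(2). At the selection after $F$, the support $S$ satisfies retention through $F$, lag-two disjointness through $B$, and novelty through the prime of $S$ missing from $F$, so it would be locally admissible, contradicting that cutoff. The hedge about ``producing a suitable small modification'' is unnecessary, since your direct check of the three conditions already proves the lemma.
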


\begin{proof}
Choose an index after which $S$ is never locally admissible, and consider such a
transition beyond that index.  At the selection immediately following $F$, the
support $S$ overlaps the predecessor $F$ and avoids the two-back term $B$.  If
$F$ omitted some prime of $S$, then $S$ would also supply novelty.  Thus $S$
would be locally admissible, a contradiction.  Hence $F$ contains every prime
of $S$.
\end{proof}

When $\operatorname{Sat}(S)$ fails, call an index $N$ a \emph{full-return
cutoff for $S$} if no exact-$S$ term is selected after $N$, $S$ is never
locally admissible after $N$, and every transition $B\to F$ ending after $N$
from an $S$-free term to a term meeting $S$ has $S\subseteq\supp(F)$.  Such
cutoffs exist by Lemma~\ref{lem:fifo-loss} and Lemma~\ref{lem:full-entry}, and
every larger index is again a full-return cutoff.

Fix now an allowed support
\[
T=\{t_1,\ldots,t_d\},
\qquad
2\le d\le K,
\]
and suppose, toward a contradiction, that
\[
\neg\operatorname{Sat}(T).
\]
Choose a full-return cutoff for $T$.  Lemma~\ref{lem:no-finite-cover} supplies
a later term disjoint from $T$; let $n_0$ be the index of one such term.  Since
every larger index remains a full-return cutoff, we may and do take $n_0$
itself as our fixed cutoff.  Thus
\begin{equation}
\begin{gathered}
T\text{ is never locally admissible after }n_0,\\
\supp(a_{n_0})\cap T=\varnothing,
\end{gathered}
\label{eq:tail-cutoff}
\end{equation}
and no term after $n_0$ has exact support $T$.  We call the sequence of terms
\[
a_{n_0+1},a_{n_0+2},\ldots
\]
the \emph{$T$-tail}, or simply the \emph{tail} while $T$ is fixed.

Relative to $T$, call a term \emph{free} if its support is disjoint from $T$,
\emph{full} if its support contains all of $T$, and \emph{proper} if it meets
$T$ but does not contain all of $T$.  A \emph{target episode} is a maximal
block of consecutive nonfree terms in the tail.

By the full-return property, every target episode begins with a full term.

\begin{lemma}[Episode grammar]\label{lem:episode-grammar}
Every target episode in the tail has length at most two, and between two
successive episodes there are at least two consecutive free terms.  Every
proper term is therefore the second term of a unique episode whose first term
is full.
\end{lemma}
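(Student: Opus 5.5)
The plan is a short case analysis that uses only lag-two disjointness \eqref{eq:lag-two} and the full-return property of the cutoff \eqref{eq:tail-cutoff}. First, every target episode begins with a full term. Let $F$ be the first term of an episode. Its predecessor is free: either $F=a_{n_0+1}$ and the predecessor is $a_{n_0}$, which is free by \eqref{eq:tail-cutoff}, or the predecessor is a free term of the tail, by maximality of the episode. The transition into $F$ ends after $n_0$, and $n_0$ is a full-return cutoff, so $T\subseteq\supp(F)$.

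\emph{Length at most two.} Let $F$ be the full first term of an episode, and let $G$ and $X$ be the next two terms. By \eqref{eq:lag-two}, $\supp(X)\cap\supp(F)=\varnothing$. Since $T\subseteq\supp(F)$, $X$ is free. Hence no episode has a third term, and every episode has length one or two.

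\emph{At least two free terms between episodes.} Let $X$ be the first free term after an episode, and let $Y$ be the term after $X$. I will show that $Y$ is free. There are two cases.
\begin{itemize}
\item If the episode is the single term $F$, then $X$ immediately follows $F$. Thus $F$ is two places before $Y$, and \eqref{eq:lag-two} gives $\supp(Y)\cap T\subseteq\supp(Y)\cap\supp(F)=\varnothing$.
\item If the episode is $F,G$, suppose for contradiction that $Y$ meets $T$. The transition $X\to Y$ goes from a free term to a term meeting $T$, after $n_0$. By full return, $T\subseteq\supp(Y)$. But $G$ is nonfree, so it contains some $t\in T$, and then $t\in\supp(Y)\cap\supp(G)$. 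Since $G$ is two places before $Y$, this contradicts \eqref{eq:lag-two}.
\end{itemize}
So $X$ and $Y$ are two consecutive free terms following every episode. In particular, two successive episodes are separated by at least two free terms.

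\emph{Proper terms.} A proper term in the tail is nonfree, so it lies in a unique episode, since episodes are maximal blocks and hence disjoint. It is not the first term of that episode, because first terms are full. Since episodes have length at most two, the proper term is the second term, and the first term is full.

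I expect no real obstacle here. The only point needing care is the second case of the separation step, where full return is combined with lag-two disjointness against the proper or full second term $G$. The boundary case of the first episode of the tail is already covered by the choice of $n_0$ as a free term in \eqref{eq:tail-cutoff}.
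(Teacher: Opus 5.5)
Your proof is correct and follows essentially the same route as the paper: full return makes each episode starter full, and lag-two disjointness against that full starter caps the length at two and frees the term two places later. In the length-two case, full return combined with lag-two disjointness against the nonfree second term frees the next term as well, and you also check explicitly that the first episode's predecessor $a_{n_0}$ is free by the choice of cutoff in \eqref{eq:tail-cutoff}, a point the paper covers only in the sentence just before the lemma.
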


\begin{proof}
Let an episode begin at index $n$.  Its starter $a_n$ is full by the
full-return property.  Lag-two disjointness makes $a_{n+2}$ disjoint from every
prime of $T$, so $a_{n+2}$ is free and the episode has length at most two.

If the episode has length one, then $a_{n+1}$ is free; lag-two disjointness with
the full term $a_n$ also makes $a_{n+2}$ free.  If the episode has length two,
then $a_{n+2}$ is free.  Were $a_{n+3}$ the start of the next episode, the
full-return property would make it full; but it would then share a target prime
with the nonfree two-back term $a_{n+1}$, contradicting lag-two disjointness.
Hence $a_{n+3}$ is free as well.

The final assertion follows because the first term of each episode is full and
there is at most one second term.
\end{proof}

Every prime of $T$ recurs by Theorem~\ref{thm:prime-recurrence}.  Consequently
the tail contains infinitely many nonfree terms.  Since every target episode
has length at most two, there are infinitely many target episodes and hence
infinitely many full starters.

For $N>n_0$, let $N_{\mathrm{prop}}(N)$ and $N_{\mathrm{full}}(N)$ denote the
numbers of proper and full terms, respectively, among
\[
a_{n_0+1},\ldots,a_N.
\]
Pair each proper term with the full starter immediately preceding it in its
episode.  This pairing is injective, and hence
\begin{equation}
\boxed{N_{\mathrm{prop}}(N)\le N_{\mathrm{full}}(N).}
\label{eq:episode-bound}
\end{equation}

The top-rank case is already impossible.  If $d=K$, every full support has rank
at least $K$ and contains $T$, so it must equal $T$.  But exact support $T$
does not occur in the tail, whereas there are infinitely many full starters.
Consequently every nonsaturated support in a putative counterexample must
satisfy
\begin{equation}
\boxed{2\le d<K.}
\label{eq:defective-rank-range}
\end{equation}
Thus the remainder of the argument concerns only $K\ge3$; when $K=2$, every
allowed support has size $K$ and the preceding contradiction already completes
the proof.  In particular, every full term in the remaining case contains at
least one prime outside $T$.

\section{A near-height proper replacement}
\label{sec:envelope}

Continue under the nonsaturation hypothesis
$\neg\operatorname{Sat}(T)$ of Section~\ref{sec:defective}, so
$2\le d=|T|<K$, and put
\[
p=\min T.
\]
For a full integer $v$, separate its target-prime powers from its outside part:
\begin{equation}
\begin{aligned}
v&=c_T(v)z(v),
& c_T(v)&=\prod_{t\in T}t^{e_t(v)},\qquad e_t(v)\ge1,\\
&& \gcd\!\left(z(v),\prod_{t\in T}t\right)&=1.
\end{aligned}
\label{eq:target-outside-factorization}
\end{equation}
Because no term in the tail has exact support $T$, every full tail term has
$z(v)>1$.

Let $p^j$ be the largest power of $p$ strictly smaller than $c_T(v)$.  Since
$|T|\ge2$, the integer $c_T(v)$ is divisible by $p$ and by another prime, so
$c_T(v)>p$ and therefore $j\ge1$.  By maximality,
\[
p^j<c_T(v)\le p^{j+1}.
\]
Because $c_T(v)$ has a prime divisor different from $p$, it is not a pure power
of $p$.  Therefore the second inequality is also strict:
$c_T(v)<p^{j+1}$.  Dividing by $p$ gives
\begin{equation}
\frac{c_T(v)}p<p^j<c_T(v).
\label{eq:p-power-sandwich}
\end{equation}
Define
\begin{equation}
w(v)=p^jz(v).
\label{eq:proper-replacement}
\end{equation}
Then
\[
\supp(w(v))=\{p\}\cup\bigl(\supp(v)\setminus T\bigr).
\]
If $r=|\supp(v)\setminus T|$, then $1\le r\le K-d$, so
\[
2\le\omega(w(v))=r+1\le K-d+1\le K-1.
\]
Thus $w(v)$ is an allowed proper integer.

\begin{lemma}[Near-height proper replacement]\label{lem:near-height}
For every full term $v$ in the tail, the integer $w(v)$ of
\eqref{eq:proper-replacement} was selected before $v$.  Moreover
\begin{equation}
\boxed{\frac vp<w(v)<v.}
\label{eq:near-height}
\end{equation}
\end{lemma}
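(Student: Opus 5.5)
The inequality \eqref{eq:near-height} is pure arithmetic, and the substantive claim, that $w(v)$ was selected before $v$, is a greediness argument. I would show that $w(v)$ is a locally admissible candidate at the very selection at which $v$ was chosen. Since $w(v)<v$, it could not have been unused at that moment, so it must already have appeared.

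\emph{The inequality.} Multiply \eqref{eq:p-power-sandwich} by $z(v)>1$ and use $v=c_T(v)z(v)$. This gives $v/p<p^jz(v)=w(v)<v$ directly. We already know $w(v)\in\mathcal A_K$ with $2\le\omega(w(v))\le K-1$. Its support is $\{p\}\cup(\supp(v)\setminus T)\subseteq\supp(v)$.

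\emph{Admissibility at the selection of $v$.} Let $v=a_n$ with $n>n_0$. By Lemma~\ref{lem:episode-grammar}, $v$ is either the first or the second term of its target episode; I would treat the two cases separately. Lag-two disjointness is automatic in both: $v$ avoids $\supp(a_{n-2})$, and $\supp(w(v))\subseteq\supp(v)$.
\begin{itemize}
\item[(i)] \emph{$v$ is the starter.} Then $a_{n-1}$ is free. Retention for $v$ gives a prime in $\supp(v)\cap\supp(a_{n-1})$, and this prime lies outside $T$, hence in $\supp(v)\setminus T\subseteq\supp(w(v))$. So $w(v)$ satisfies retention. Novelty holds because $p\in\supp(w(v))$ while $p\notin\supp(a_{n-1})$, the predecessor being free.
\item[(ii)] \emph{$v$ is the second term.} Then its predecessor $u=a_{n-1}$ is the full starter. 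Retention holds because $p\in T\subseteq\supp(u)$. For novelty, use the novelty of $v$: there is a prime $q\in\supp(v)\setminus\supp(u)$. Since $u\supseteq T$, we have $q\notin T$, so $q\in\supp(w(v))\setminus\supp(u)$.
\end{itemize}

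\emph{Conclusion.} In either case $w(v)$ is an allowed integer satisfying \eqref{eq:retention}--\eqref{eq:novelty} at index $n$, and $w(v)<v$. If it had not appeared before index $n$, greediness would have chosen it, or something even smaller, instead of $v$. Hence $w(v)$ occurred earlier.

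The only delicate point is case~(ii): novelty for $w(v)$ must be inherited from the novelty of $v$ and not from $p$. That is why the argument uses that the new prime of $v$ cannot lie in $T$ when the predecessor is full.
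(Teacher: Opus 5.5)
Your proposal is correct and follows the paper's proof essentially step for step. Lag-two disjointness is inherited via $\supp(w(v))\subseteq\supp(v)$, and your two episode cases (retention through an outside prime with novelty from $p$, or retention through $p$ with novelty from an outside prime of $v$) are exactly those of the paper's table. The conclusion by greedy minimality, and the inequality by multiplying \eqref{eq:p-power-sandwich} by $z(v)$, are also the same as in the paper.
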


\begin{proof}
Because $\supp(w(v))\subseteq \supp(v)$, lag-two disjointness of $v$ implies
lag-two disjointness of $w(v)$.  It remains to verify retention and novelty.

If $v$ is an episode starter, its predecessor is free.  Retention of $v$ must
therefore occur through a prime outside $T$, and every outside prime of $v$
remains in $w(v)$.  The prime $p$, absent from the free predecessor, supplies
novelty for $w(v)$.

If $v$ is the second term of an episode and is full, then its predecessor is
full.  The replacement $w(v)$ retains $p$.  Since every target prime is already
present in the predecessor, novelty of $v$ must be supplied by an outside prime;
that outside prime is again retained in $w(v)$.

Thus $w(v)$ is locally admissible at the selection of $v$.  By
\eqref{eq:p-power-sandwich}, it is numerically smaller than $v$, so greedy
minimality forces it to have been selected earlier.  Multiplying
\eqref{eq:p-power-sandwich} by $z(v)$ gives \eqref{eq:near-height}.
\end{proof}

The two cases in the proof of Lemma~\ref{lem:near-height} are summarized in
Table~\ref{tab:near-height-cases}.

\begin{table}[ht]
\centering
\caption{Retention and novelty for the replacement $w(v)$ in
Lemma~\ref{lem:near-height}.}
\label{tab:near-height-cases}
\begin{tabular}{lll}
\toprule
Predecessor of $v$ & Retention for $w(v)$ & Novelty for $w(v)$ \\
\midrule
free & an outside carrier of $v$ & target prime $p$ \\
full & target prime $p$ & an outside novelty of $v$ \\
\bottomrule
\end{tabular}
\end{table}

For an index $N>n_0$, let
\begin{equation}
H(N)=\max\{a_n:n\le N\text{ and }a_n\text{ is proper}\},
\label{eq:proper-height}
\end{equation}
whenever the set is nonempty.  Thus $H(N)$ is the running maximum of the
selected proper values.  In particular, if $v=a_n$ is a full term with
$n_0<n\le N$, then Lemma~\ref{lem:near-height} shows that its replacement
$w(v)$ from \eqref{eq:proper-replacement} was selected before $v$, and hence
\[
w(v)\le H(N).
\]
Combining this with \eqref{eq:near-height} gives the common numerical envelope
\begin{equation}
\boxed{v<pH(N)}
\label{eq:full-envelope}
\end{equation}
for every full term $v=a_n$ with $n_0<n\le N$.

Define
\begin{equation}
\Full_{K,T}(X)
=
\#\{m<X:T\subseteq \supp(m),\ \omega(m)\le K\}.
\label{eq:full-population}
\end{equation}
Since the sequence is injective, \eqref{eq:full-envelope} implies
\begin{equation}
\boxed{N_{\mathrm{full}}(N)\le \Full_{K,T}(pH(N)).}
\label{eq:full-count-envelope}
\end{equation}

There are infinitely many distinct full starters in the tail, so their
numerical values are unbounded.  By \eqref{eq:near-height}, the selected proper
values are also unbounded.  Only finitely many proper values occur at or before
the fixed cutoff $n_0$, so the unbounded tail produces arbitrarily large global
proper-value records after $n_0$.

\section{The exact shadow at a proper record}
\label{sec:record-shadow}

Let $H=a_n$ be a global proper-value record with $n>n_0$.  Here
\emph{record} always refers to numerical value: the term $H$ is proper, and
$H$ exceeds every proper value selected at an earlier index.  By
Lemma~\ref{lem:episode-grammar}, a proper term is the second term of its target
episode, so the local picture is
\[
A_{\mathrm{free}}
\longrightarrow
V_{\mathrm{full}}
\longrightarrow
H_{\mathrm{proper}}.
\]
Write $B=\supp(A)$.  Then
\begin{equation}
B\cap T=\varnothing,
\qquad
|B|\le K.
\label{eq:blocker-conditions}
\end{equation}

Define the proper-record shadow
\begin{equation}
\Shadow_{K,T}(H;B)
=
\left\{m<H:
\begin{gathered}
\omega(m)=K,\\
|\supp(m)\cap T|=1,\\
\supp(m)\cap B=\varnothing
\end{gathered}
\right\},
\label{eq:record-shadow-set}
\end{equation}
and let
\[
R_{K,T}(H;B)=|\Shadow_{K,T}(H;B)|.
\]

\begin{lemma}[Automatic novelty at a proper record]
\label{lem:automatic-novelty}
Every member of the record shadow \eqref{eq:record-shadow-set} was selected at
an index strictly smaller than $n$.
\end{lemma}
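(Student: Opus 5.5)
The plan is to show that every member $m$ of the record shadow was already a legal candidate at the selection of $H=a_n$. Greedy minimality then does the rest: since $m<H$ and $H$ was chosen, $m$ must have been used before index $n$. This needs only three things: $m\in\mathcal A_K$, and $m$ satisfies retention, lag-two disjointness and novelty relative to the pair $(a_{n-2},a_{n-1})=(A,V)$.

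Membership in $\mathcal A_K$ is immediate, because $\omega(m)=K\ge2$. Lag-two disjointness is exactly the defining condition $\supp(m)\cap B=\varnothing$, with $B=\supp(A)$. For retention, let $t$ be the unique prime in $\supp(m)\cap T$. The predecessor $V$ is full by Lemma~\ref{lem:episode-grammar}, since $H$ is proper and therefore the second term of an episode whose starter is full. Hence $t\in\supp(V)$, so $m$ shares $t$ with $V$.

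The key step is novelty, and this is where the hypothesis $|\supp(m)\cap T|=1$ and the rank $\omega(m)=K$ are used together. Suppose, toward a contradiction, that $\supp(m)\subseteq\supp(V)$. Since $V$ is full, $\supp(V)\supseteq T$. Moreover $m$ contains $K-1$ primes outside $T$, so $\supp(V)$ would contain these $K-1$ outside primes together with all $d\ge2$ primes of $T$. That gives
\[
\omega(V)\ge d+K-1\ge K+1,
\]
which contradicts $V\in\mathcal A_K$. Hence $\supp(m)\setminus\supp(V)\ne\varnothing$, and novelty holds. This is the one place where the argument could fail if the shadow allowed two or more target primes, so I would state it carefully.

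Having checked all four conditions, $m$ is locally admissible at index $n$. By the Infinite continuation criterion, the lexicographically earliest rule is exactly the greedy local rule, so the selected value $a_n=H$ is the least unused admissible integer. If $m$ were unused at that time, it would be a smaller candidate than $H$, which is impossible. Therefore $m$ was selected at some index strictly below $n$, and $m\ne H$ since $m<H$. Notice that the record property of $H$ is not actually needed for this lemma; only the local picture $A\to V\to H$ is used. The record property will matter later, when this shadow is compared with the full-term envelope~\eqref{eq:full-count-envelope}.
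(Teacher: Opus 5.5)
Your proof is correct and follows the paper's argument. Retention comes from the unique target prime lying in the full predecessor $V$, lag-two disjointness from $\supp(m)\cap B=\varnothing$, and novelty from a rank count that is the same as the paper's reformulation right after its proof (the proof itself phrases this as $\supp(m)\subseteq\supp(V)$ forcing $\supp(m)=\supp(V)$, which is impossible since $V$ contains all $d\ge2$ target primes). Your remark that only the local picture $A\to V\to H$ is used here, and not the record property, is also accurate.
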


\begin{proof}
Fix $m\in\Shadow_{K,T}(H;B)$.  Its unique target prime belongs to the full
predecessor $V$, so $m$ satisfies retention.  The condition
$\supp(m)\cap B=\varnothing$ gives lag-two disjointness.

It remains only to verify novelty.  If novelty failed, then
\[
\supp(m)\subseteq \supp(V).
\]
But $\omega(m)=K$ while $\omega(V)\le K$, so this containment would force
$\supp(m)=\supp(V)$.  That is impossible: $V$ contains all $d\ge2$ primes of
$T$, whereas $m$ contains exactly one.  Hence $m$ introduces at least one
prime relative to $V$.

Therefore $m$ is locally admissible at the selection of $H=a_n$.  If it were
unused at time $n$, it would be a smaller candidate than the observed winner
$H$.  Greedy minimality forces $m$ to have been selected at an earlier index.
\end{proof}

The same novelty argument can be expressed as a rank count: every support in
\eqref{eq:record-shadow-set} has $K-1$ primes outside $T$, whereas the full
predecessor has at most $K-d\le K-2$ outside primes.  Thus every shadow support
contains an outside prime absent from the predecessor.

For reference, the essential incidence pattern is
\[
\begin{array}{c|ccc}
& A & V & m\\ \hline
\text{unique target prime of }m &0&1&1\\
\text{other target primes}&0&1&0\\
\text{some outside novelty}&0&0&1\\
\text{remaining outside primes}&0&*&1
\end{array}
\]
where the zeros in the first column follow from $A$ being free together with
$\supp(m)\cap B=\varnothing$.

Let $C_T$ denote the number of proper terms selected at or before the fixed
cutoff $n_0$.  Lemma~\ref{lem:automatic-novelty} implies
\begin{equation}
R_{K,T}(H;B)
\le C_T+N_{\mathrm{prop}}(n),
\label{eq:shadow-vs-proper-count}
\end{equation}
because every shadow integer is a proper historical term.  At the record time,
$H(n)=H$.  Combining \eqref{eq:episode-bound},
\eqref{eq:full-count-envelope}, and \eqref{eq:shadow-vs-proper-count} yields
the necessary inequality
\begin{equation}
\boxed{
R_{K,T}(H;B)-\Full_{K,T}(pH)\le C_T.
}
\label{eq:record-necessary}
\end{equation}
The right-hand side depends only on the fixed opening segment; in particular it
is independent of the record height and of the actual two-back support $B$.

\section{Asymptotic size of the record shadow}
\label{sec:counting}

We compare the two terms in \eqref{eq:record-necessary}.  The analytic input is
the classical fixed-order theorem of Landau~\cite[pp.~203--211]{landau-handbuch};
see also Tenenbaum~\cite[Chapter~II.6]{tenenbaum2015}.  For every fixed integer
$j\ge1$,
\begin{equation}
\pi_{=j}(X):=\#\{n<X:\omega(n)=j\}
\sim
\frac{X}{\log X}\frac{(\log\log X)^{j-1}}{(j-1)!}.
\label{eq:landau}
\end{equation}
We use Vinogradov notation $A\ll_\alpha B$ to mean
$A\le C_\alpha B$ for some constant $C_\alpha>0$ depending only on the
indicated parameter(s) $\alpha$; with no subscript, the constant is absolute.
We shall also use the standard fixed-order consequence
\begin{equation}
\pi_{\le j}(X):=\#\{n<X:\omega(n)\le j\}
\ll_j
\frac{X(\log\log X)^{j-1}}{\log X}
\label{eq:landau-upper}
\end{equation}
for large $X$.  Since every lower positive fixed order is smaller by at least
one power of $\log\log X$, $\pi_{\le j}(X)$ has the same leading term as
$\pi_{=j}(X)$.  The sole rank-zero integer $n=1$ contributes only $1$ and is
negligible.

We will repeatedly apply \eqref{eq:landau-upper} at a height varying between
$\sqrt X$ and $X$.  The following uniform form records the required
comparability once and for all: for fixed $j\ge1$,
\begin{equation}
\sup_{\sqrt X\le Y\le X}
\frac{\pi_{\le j}(Y)}{Y}
\ll_j
\frac{(\log\log X)^{j-1}}{\log X}.
\label{eq:landau-window}
\end{equation}
Indeed, on this interval $\log Y\asymp\log X$ and
$\log\log Y\ll\log\log X$.

For a nonempty finite prime set $S$, put
\begin{equation}
\kappa(S)=\prod_{s\in S}\frac1{s-1}.
\label{eq:anchor-weight}
\end{equation}

\begin{lemma}[Removing finitely many primes]\label{lem:forbidden-primes}
Fix $j\ge1$ and a finite set $Q$ of primes.  Then
\[
\#\{n<X:\omega(n)=j,\ \supp(n)\cap Q=\varnothing\}
\sim \pi_{=j}(X).
\]
The same main term holds with $\omega(n)\le j$.
\end{lemma}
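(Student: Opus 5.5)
Since the count with $Q$ removed is trivially at most $\pi_{=j}(X)$, it suffices to show that the removed part is $o(\pi_{=j}(X))$. Equivalently, we bound
\[
E_j(X)=\#\{n<X:\omega(n)=j,\ \supp(n)\cap Q\ne\varnothing\}.
\]
By a union bound over the finitely many $q\in Q$, it suffices to treat a single prime $q$ and show $\#\{n<X:\omega(n)=j,\ q\mid n\}=o(\pi_{=j}(X))$.

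The plan is to decompose such $n$ as $n=q^e n'$ with $e\ge1$ and $q\nmid n'$, so that $\omega(n')=j-1$ and $n'<X/q^e$. If $j=1$, then $n'=1$ and $n=q^e$, so at most $\log_2 X+1=o(X/\log X)$ such $n$ exist. For $j\ge2$, the count is at most
\[
\sum_{e\ge1}\pi_{\le j-1}(X/q^e).
\]
The sum is split at $q^e\le\sqrt X$. For the small-$e$ range, the height $Y=X/q^e$ lies in $[\sqrt X,X]$, so the window bound \eqref{eq:landau-window} gives
\[
\pi_{\le j-1}(X/q^e)\ll_j \frac{X}{q^e}\cdot\frac{(\log\log X)^{j-2}}{\log X}.
\]
Summing over $e$ gives $\ll_{j}\frac{X(\log\log X)^{j-2}}{(q-1)\log X}$, which is the source of the anchor weight $\kappa$. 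For the large-$e$ range we have $X/q^e<\sqrt X$, so each term is at most $\sqrt X$, and there are $O(\log X)$ such $e$. This contributes $O(\sqrt X\log X)$. In total,
\[
E_j(X)\ll_{j,Q}\frac{X(\log\log X)^{j-2}}{\log X},
\]
which is smaller than the main term \eqref{eq:landau} by a factor of $\log\log X$. This gives the asymptotic for $\omega(n)=j$.

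For $\omega(n)\le j$, we sum the equality statement over orders $1\le i\le j$ and include $n=1$. By the remark after \eqref{eq:landau-upper}, the orders $i<j$ are each lower by a power of $\log\log X$, and the same holds for their $Q$-free subsets. Hence the $Q$-free count with $\omega\le j$ is $\sim\pi_{=j}(X)\sim\pi_{\le j}(X)$.

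The only delicate point is the uniformity needed when Landau's bound is applied at the varying heights $X/q^e$. The window estimate \eqref{eq:landau-window} handles this directly, and cutting the $e$-sum at $\sqrt X$ keeps the remaining heights in its range. The main obstacle is therefore just making sure the trivial tail bound $\sqrt X\log X$ is negligible. It is, since $\pi_{=j}(X)\gg X/\log X$.
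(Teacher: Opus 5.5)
Your proposal is correct and follows essentially the same argument as the paper. Both use a union bound over $q\in Q$, the factorization $n=q^e n'$ with $\omega(n')=j-1$, the trivial $O(\log X)$ count for $j=1$, the split at $q^e\le\sqrt X$ with the uniform window bound plus a geometric sum in $e$, the crude $O(\sqrt X\log X)$ tail, and summation over the lower orders (with $n=1$) for the cumulative version.
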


\begin{proof}
Fix $q\in Q$ and let
\[
E_{j,q}(X)=\{n<X:\omega(n)=j,\ q\mid n\}
\]
be the exceptional rank-$j$ integers excluded by the prime $q$.  Every
$n\in E_{j,q}(X)$ has a unique factorization
\[
n=q^av,
\qquad a\ge1,
\qquad q\nmid v.
\]
If $j=1$, only powers of $q$ occur, giving $O(\log X)$ such exceptional
integers.  Since
\[
\frac{O(\log X)}{\pi_{=1}(X)}
=O\!\left(\frac{(\log X)^2}{X}\right)\longrightarrow0,
\]
there is nothing further to prove in this case.

Suppose now that $j\ge2$.  Since $q$ accounts for one of the $j$ distinct
prime divisors of $n=q^av$, every exceptional integer has
\[
\omega(v)=j-1.
\]
Thus, for each fixed exponent $a$, counting exceptional integers
$n=q^av<X$ reduces to counting rank-$(j-1)$ integers
\[
v<\frac{X}{q^a}.
\]
We consider separately the ranges
\[
q^a\le\sqrt X
\qquad\text{and}\qquad
q^a>\sqrt X.
\]
The first range is where the uniform Landau estimate
\eqref{eq:landau-window} applies; in the second range the cofactor $v$ is
already smaller than $\sqrt X$, so a crude bound suffices.

First suppose $q^a\le\sqrt X$.  For this fixed exponent $a$, put
\[
Y=\frac{X}{q^a}.
\]
Then
\[
\sqrt X\le Y\le X.
\]
Equation~\eqref{eq:landau-window}, with $j-1$ in place of $j$, therefore gives
\[
\pi_{\le j-1}(Y)
\ll_j
Y\frac{(\log\log X)^{j-2}}{\log X}
=
\frac{X}{q^a}\frac{(\log\log X)^{j-2}}{\log X}.
\]
The possible cofactors $v$ have exactly $j-1$ distinct prime divisors, so
their number is at most $\pi_{\le j-1}(Y)$.  Hence the number of exceptional
integers corresponding to this particular exponent $a$ is at most
\[
O_j\!\left(
\frac{X}{q^a}\frac{(\log\log X)^{j-2}}{\log X}
\right).
\]
Summing over all exponents with $q^a\le\sqrt X$, and enlarging the geometric
sum if necessary, gives
\[
\begin{aligned}
\#\{n\in E_{j,q}(X):q^a\le\sqrt X\}
&\ll_j
\frac{X(\log\log X)^{j-2}}{\log X}
\sum_{a\ge1}\frac1{q^a}\\
&=
O_{j,q}\!\left(
\frac{X(\log\log X)^{j-2}}{\log X}
\right).
\end{aligned}
\]
Comparing this with \eqref{eq:landau}, the exceptional count is smaller than
$\pi_{=j}(X)$ by a factor $O_{j,q}(1/\log\log X)$, and hence is
$o(\pi_{=j}(X))$.

Now suppose $q^a>\sqrt X$.  Then
\[
v<\frac{X}{q^a}<\sqrt X.
\]
There are only $O(\log X)$ possible exponents $a$, and for each there are
fewer than $\sqrt X$ possible integers $v$.  Thus this range contributes at
most
\[
O(\sqrt X\log X)=o(\pi_{=j}(X)).
\]
Therefore $|E_{j,q}(X)|=o(\pi_{=j}(X))$ for every fixed $q\in Q$.
Since $Q$ is finite, the union of these exceptional sets is still
$o(\pi_{=j}(X))$, proving the exact-rank claim.  The cumulative version follows
by summing the fixed positive orders, with the single integer $n=1$
contributing only $O(1)$.
\end{proof}

\begin{lemma}[Fixed anchor set]\label{lem:fixed-anchors}
Let $S$ be a fixed set of $s$ primes and let $K>s$.  Then
\begin{equation}
\#\{m<X:S\subseteq \supp(m),\ \omega(m)=K\}
\sim
\frac{\kappa(S)}{(K-s-1)!}
\frac{X(\log\log X)^{K-s-1}}{\log X}.
\label{eq:fixed-anchor-asymptotic}
\end{equation}
The same main term holds with $\omega(m)\le K$.
\end{lemma}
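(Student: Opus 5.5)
We decompose each $m$ with $S\subseteq\supp(m)$ as $m=c\,v$. Here $c$ is an $S$-power, that is, $\supp(c)=S$, and $v$ is coprime to $\prod_{s\in S}s$ with $\omega(v)=K-s$. The count then becomes
\[
\sum_{c:\ \supp(c)=S}\#\Bigl\{v<\tfrac Xc:\ \omega(v)=K-s,\ \supp(v)\cap S=\varnothing\Bigr\}.
\]
Put $j=K-s\ge1$. For each fixed $c$, Lemma~\ref{lem:forbidden-primes} with $Q=S$ gives that the inner count is asymptotic to $\pi_{=j}(X/c)$. Since $c$ is fixed, \eqref{eq:landau} shows this is asymptotic to
\[
\frac1c\,\frac{X}{\log X}\frac{(\log\log X)^{j-1}}{(j-1)!}.
\]
The weights sum correctly, because
\[
\sum_{\supp(c)=S}\frac1c=\prod_{s\in S}\sum_{e\ge1}s^{-e}=\prod_{s\in S}\frac1{s-1}=\kappa(S).
\]
The exponent is also right, since $j-1=K-s-1$.

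The main step is justifying the interchange of the infinite sum over $c$ with the asymptotic, which needs a tail bound. Fix a threshold $C_0$. The finitely many $c\le C_0$ contribute asymptotically
\[
\Bigl(\sum_{c\le C_0}\tfrac1c\Bigr)\frac{X(\log\log X)^{j-1}}{(j-1)!\log X}.
\]
For $C_0<c\le\sqrt X$ we use the window bound \eqref{eq:landau-window}, which bounds each inner count by
\[
\ll_j \frac Xc\,\frac{(\log\log X)^{j-1}}{\log X}.
\]
Summing over $c>C_0$ gives at most the main order times $\sum_{c>C_0}1/c$, and this factor tends to $0$ as $C_0\to\infty$.

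For $c>\sqrt X$ the cofactor satisfies $v<\sqrt X$. The number of $S$-powers below $X$ is $O_S((\log X)^s)$, so this range contributes $O(\sqrt X(\log X)^s)$, which is negligible. Letting $X\to\infty$ and then $C_0\to\infty$ gives \eqref{eq:fixed-anchor-asymptotic}.

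For the version with $\omega(m)\le K$, we sum the exact-rank statements over ranks $K'$ with $s<K'\le K$. Every rank below $K$ has lower order by a power of $\log\log X$. The single rank $K'=s$ consists only of $S$-powers, giving $O((\log X)^s)$ integers, which is negligible. So the leading term comes from rank $K$ alone and is unchanged.
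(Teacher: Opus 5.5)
Your proposal is correct and follows essentially the same route as the paper. Both arguments factor $m=cv$ into an exact $S$-power and an $S$-free rank-$(K-s)$ cofactor, and then treat four ranges or cases in the same way:
\begin{itemize}
\item for the finitely many $c\le C_0$, Lemma~\ref{lem:forbidden-primes} and \eqref{eq:landau} are applied term by term;
\item for $C_0<c\le\sqrt X$, the window bound \eqref{eq:landau-window} is used;
\item for $c>\sqrt X$, the crude bound $O_S(\sqrt X(\log X)^s)$ suffices;
\item the cumulative version is handled by the lower exact ranks together with the $O((\log X)^s)$ pure $S$-powers.
\end{itemize}
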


\begin{proof}
Put $j=K-s\ge1$ and
\[
\Lambda_j(X)=\frac{X(\log\log X)^{j-1}}{\log X}.
\]
For an exponent vector $\mathbf b=(b_t)_{t\in S}$ with every $b_t\ge1$, define
\[
u(\mathbf b)=\prod_{t\in S}t^{b_t}.
\]
Every integer counted in the exact-rank population has a unique factorization
\[
m=u(\mathbf b)v,
\qquad
\supp(v)\cap S=\varnothing,
\qquad
\omega(v)=j.
\]
For brevity, write
\[
N_{S,K}(X)
=
\#\{m<X:S\subseteq\supp(m),\ \omega(m)=K\}.
\]

For every fixed $u>0$,
\begin{equation}
\Lambda_j(X/u)\sim \frac1u\Lambda_j(X),
\label{eq:lambda-scaling}
\end{equation}
because $\log(X/u)\sim\log X$ and
$\log\log(X/u)\sim\log\log X$.  Fix $M$.  For the finitely many exponent
vectors satisfying
\[
u(\mathbf b)\le M,
\]
Lemma~\ref{lem:forbidden-primes}, \eqref{eq:landau}, and
\eqref{eq:lambda-scaling} therefore apply term by term.  On the complementary
range
\[
M<u(\mathbf b)\le\sqrt X,
\]
the uniform bound \eqref{eq:landau-window} gives a total at most a constant
multiple of
\[
\Lambda_j(X)
\sum_{u(\mathbf b)>M}\frac1{u(\mathbf b)}.
\]
For $u(\mathbf b)>\sqrt X$, there are $O_S((\log X)^s)$ possible exponent
vectors and fewer than $\sqrt X$ possible cofactors for each, giving
$O_S(\sqrt X(\log X)^s)=o(\Lambda_j(X))$.  Consequently
\begin{equation}
\begin{aligned}
\frac{N_{S,K}(X)}{\Lambda_j(X)}
&=
\frac1{(j-1)!}
\sum_{u(\mathbf b)\le M}\frac1{u(\mathbf b)}
+o_M(1)\\
&\quad+
O_S\!\left(
\sum_{u(\mathbf b)>M}\frac1{u(\mathbf b)}
\right)
+o(1).
\end{aligned}
\label{eq:anchor-truncation}
\end{equation}
Finally,
\[
\sum_{\mathbf b}\frac1{u(\mathbf b)}
=
\prod_{t\in S}\sum_{b\ge1}t^{-b}
=
\prod_{t\in S}\frac1{t-1}
=\kappa(S).
\]
First let $X\to\infty$ with $M$ fixed in \eqref{eq:anchor-truncation}, then let
$M\to\infty$.  This proves \eqref{eq:fixed-anchor-asymptotic}.

For the cumulative condition $\omega(m)\le K$, each exact rank $s<r<K$ is
handled by the same argument with $K$ replaced by $r$, giving
\[
\begin{aligned}
&\#\{m<X:S\subseteq\supp(m),\ \omega(m)=r\}\\
&\qquad\ll_{S,r}
\frac{X(\log\log X)^{r-s-1}}{\log X}
=o\!\left(
\frac{X(\log\log X)^{K-s-1}}{\log X}
\right).
\end{aligned}
\]
The remaining rank-$s$ integers are supported entirely on $S$; there are only
$O_S((\log X)^s)$ of them, which is also lower order than the main term in
\eqref{eq:fixed-anchor-asymptotic}.  Hence the cumulative population has the
same main term as exact rank $K$.
\end{proof}

The two-back support $B$ varies with the record.  We therefore need a bound
uniform in the identity of a moving forbidden prime.

\begin{lemma}[Uniform cost of one moving forbidden prime]
\label{lem:moving-mask}
Fix $K\ge3$ and a prime $t$.  Uniformly over primes $r\ne t$,
\begin{equation}
\#\{m<X:t,r\mid m,\ \omega(m)\le K\}
\ll_{K,t}
\frac{X(\log\log X)^{K-3}}{\log X}
+\sqrt X(\log X)^2.
\label{eq:moving-mask-bound}
\end{equation}
\end{lemma}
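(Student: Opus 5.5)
We factor each counted integer by its $t$- and $r$-parts, write $m=t^a r^b v$ with $a,b\ge1$, $\gcd(v,tr)=1$, and $\omega(v)\le K-2$, and count the cofactors $v$ below $X/(t^ar^b)$ exactly as in the proofs of Lemma~\ref{lem:forbidden-primes} and Lemma~\ref{lem:fixed-anchors}. The difficulty is that $r$ is not fixed, so the sum $\sum_b r^{-b}$ cannot be absorbed into a constant depending on $r$. The point is that it only helps: $\sum_{b\ge1}r^{-b}=1/(r-1)\le1$ uniformly in $r\ge2$. The whole dependence on $r$ is therefore bounded by an absolute constant, and the implied constant depends only on $K$ and $t$.

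First, consider the range $u=t^ar^b\le\sqrt X$. Put $Y=X/u$, so $\sqrt X\le Y\le X$, and apply the uniform window estimate \eqref{eq:landau-window} with $j=K-2\ge1$; this is where $K\ge3$ is needed. It bounds the number of admissible $v$ by
\[
\ll_K \frac{X}{u}\frac{(\log\log X)^{K-3}}{\log X}.
\]
Summing over all pairs $(a,b)$ with $a,b\ge1$ gives at most the factor
\[
\sum_{a\ge1}t^{-a}\sum_{b\ge1}r^{-b}=\frac{1}{(t-1)(r-1)}\le\frac1{t-1},
\]
which yields the first term of \eqref{eq:moving-mask-bound} with a constant depending only on $K$ and $t$. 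One small point: we drop the coprimality condition on $v$, which only enlarges the count, and we bound $\pi_{\le K-2}(Y)$ including the integer $v=1$.

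Second, consider the range $u>\sqrt X$. Then $v<X/u<\sqrt X$, so for each pair $(a,b)$ there are fewer than $\sqrt X$ choices of $v$. The pairs satisfy $t^a\le X$ and $r^b\le X$. This gives $a\le\log X/\log 2$ and $b\le\log X/\log 2$ uniformly in $r$, so there are $O((\log X)^2)$ pairs. The contribution is therefore $O(\sqrt X(\log X)^2)$, which is the second term.

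The step needing the most care is the uniformity in $r$. We must check that no hidden constant depends on $r$. In the first range this is the geometric sum, bounded by $1$. In the second range it is the count of exponents $b$, which is bounded by $\log_2 X$ independently of $r$. Neither step uses that $r$ is small, so the bound is uniform over all primes $r\ne t$, including $r>X$, where the set is empty.
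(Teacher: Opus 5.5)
Your proof is correct and follows the paper's argument essentially step for step. You use the same factorization $m=t^ar^bv$ with the coprimality condition dropped, and on the range $t^ar^b\le\sqrt X$ you apply the uniform window estimate with $j=K-2$ together with the $r$-uniform bound $1/((t-1)(r-1))\le 1/(t-1)$; on the complementary range you take the crude count of $O((\log X)^2)$ exponent pairs, each with fewer than $\sqrt X$ cofactors.
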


\begin{proof}
Factor the exact powers of $t$ and $r$:
\[
m=t^ar^bv,
\qquad a,b\ge1,
\qquad \gcd(v,tr)=1,
\qquad \omega(v)\le K-2.
\]
Dropping the coprimality condition can only increase the count.  If
$t^ar^b\le\sqrt X$, put $Y=X/(t^ar^b)$.  Then
$\sqrt X\le Y\le X$, so \eqref{eq:landau-window} gives
\[
\#\{v<Y:\omega(v)\le K-2\}
\ll_K
\frac{Y(\log\log X)^{K-3}}{\log X}.
\]
Summing over these exponent pairs and using
\[
\sum_{a,b\ge1}\frac1{t^ar^b}
=
\frac1{(t-1)(r-1)}
\le\frac1{t-1}
\]
gives the first term of \eqref{eq:moving-mask-bound}, uniformly in $r$.
For the remaining exponent pairs,
\[
t^ar^b>\sqrt X,
\]
there are $O((\log X)^2)$ possibilities, uniformly in $r$, and fewer than
$\sqrt X$ possible cofactors for each.  This gives the second term.
\end{proof}

Define
\begin{equation}
f_K(H)=\frac{H(\log\log H)^{K-2}}{\log H},
\qquad
c_{K,T}=\frac1{(K-2)!}\sum_{t\in T}\frac1{t-1}>0.
\label{eq:shadow-main-scale}
\end{equation}

\begin{theorem}[Exact proper-record surplus]\label{thm:record-surplus}
Fix $K$ and a prime set $T$ with $2\le d=|T|<K$, and put $p=\min T$.
As $H\to\infty$,
\begin{equation}
R_{K,T}(H;B)
=
\bigl(c_{K,T}+o(1)\bigr)f_K(H),
\label{eq:shadow-asymptotic}
\end{equation}
where the $o(1)$ is uniform over all prime sets $B$ satisfying
\[
B\cap T=\varnothing,
\qquad
|B|\le K.
\]
Here $K$ and $T$ are fixed throughout: all implied constants and asymptotic
thresholds may depend on them.  The asserted uniformity is only in the moving
blocker $B$.
Moreover,
\begin{equation}
\Full_{K,T}(pH)
\sim
\frac{p\,\kappa(T)}{(K-d-1)!}
\frac{H(\log\log H)^{K-d-1}}{\log H}.
\label{eq:full-asymptotic}
\end{equation}
Consequently, uniformly over all such $B$,
\begin{equation}
\boxed{
R_{K,T}(H;B)-\Full_{K,T}(pH)
=
\bigl(c_{K,T}+o(1)\bigr)f_K(H)
\longrightarrow+\infty.
}
\label{eq:surplus-asymptotic}
\end{equation}
\end{theorem}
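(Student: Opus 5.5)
The shadow count will be computed by decomposing according to the unique target prime, and the full count by a direct application of Lemma~\ref{lem:fixed-anchors}. Since $|\supp(m)\cap T|=1$, the shadow splits as a disjoint union over $t\in T$ of
\[
\Shadow_t=\{m<H:\omega(m)=K,\ \supp(m)\cap T=\{t\},\ \supp(m)\cap B=\varnothing\}.
\]
For each fixed $t$, the comparison set is
\[
\Shadow_t^{0}=\{m<H:\omega(m)=K,\ \supp(m)\cap T=\{t\}\}.
\]
Removing the other target primes $T\setminus\{t\}$ costs only a fixed finite set. Write $m=t^a v$ with $\gcd(v,\prod T)=1$ and $\omega(v)=K-1$, and run the argument of Lemma~\ref{lem:fixed-anchors} with anchor set $\{t\}$. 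The cofactor count is supplied by Lemma~\ref{lem:forbidden-primes} with $Q=T$ in place of the plain Landau count. This gives
\[
|\Shadow_t^0|\sim \frac{1}{(t-1)(K-2)!}f_K(H),
\]
and summing over $t\in T$ produces the constant $c_{K,T}$. Equivalently, we can take the full anchored count from Lemma~\ref{lem:fixed-anchors} and subtract the $O(H(\log\log H)^{K-3}/\log H)$ integers divisible by $t$ and some other $t'\in T$, which Lemma~\ref{lem:moving-mask} bounds.

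The uniformity in the moving blocker $B$ is the step needing care, and I would handle it by a union bound. We have $0\le |\Shadow_t^0|-|\Shadow_t|$. Every integer in the difference is divisible by $t$ and by some $r\in B$, with $r\ne t$ because $B\cap T=\varnothing$. Since $|B|\le K$, Lemma~\ref{lem:moving-mask} bounds the difference by
\[
K\cdot O_{K,t}\!\left(\frac{H(\log\log H)^{K-3}}{\log H}+\sqrt H(\log H)^2\right).
\]
The constant here is independent of the identity of the primes in $B$, and the quantity is $o(f_K(H))$ because $K\ge3$. This gives \eqref{eq:shadow-asymptotic} with uniform $o(1)$. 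The only thing to verify is that Lemma~\ref{lem:moving-mask}'s constant truly does not depend on $r$; its proof used $\sum_{a,b}t^{-a}r^{-b}\le 1/(t-1)$ and a uniform count of exponent pairs, so this holds.

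For \eqref{eq:full-asymptotic}, apply the cumulative form of Lemma~\ref{lem:fixed-anchors} with $S=T$, $s=d<K$, and $X=pH$. Then use the scaling \eqref{eq:lambda-scaling}: since $\log(pH)\sim\log H$ and $\log\log(pH)\sim\log\log H$, we get $\Lambda_{K-d}(pH)\sim p\Lambda_{K-d}(H)$. Finally, $d\ge2$ gives $K-d-1\le K-3$, so
\[
\Full_{K,T}(pH)=O\bigl(H(\log\log H)^{K-3}/\log H\bigr)=o(f_K(H)).
\]
Subtracting this from \eqref{eq:shadow-asymptotic} yields \eqref{eq:surplus-asymptotic}. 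Since $c_{K,T}>0$ and $f_K(H)\to\infty$, the difference tends to $+\infty$ uniformly in $B$.
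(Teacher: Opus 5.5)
Your proposal is correct and takes essentially the same route as the paper. You split the shadow by its unique target prime $t$ and take the anchored count from Lemma~\ref{lem:fixed-anchors} with anchor $\{t\}$. You then remove the other target primes and the at most $K$ primes of $B$ by a union bound using the $r$-uniform Lemma~\ref{lem:moving-mask}. Finally you handle $\Full_{K,T}(pH)$ by Lemma~\ref{lem:fixed-anchors} with $S=T$ and the scaling $\Lambda_{K-d}(pH)\sim p\,\Lambda_{K-d}(H)$. Your alternative of feeding Lemma~\ref{lem:forbidden-primes} with $Q=T$ into the anchor argument is an equally valid minor variant of the target-prime exclusion step.
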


\begin{proof}
Fix $t\in T$.  Lemma~\ref{lem:fixed-anchors} with anchor set $\{t\}$ gives
\[
\#\{m<H:t\mid m,\ \omega(m)=K\}
=
\left(\frac1{(t-1)(K-2)!}+o(1)\right)f_K(H).
\]
To obtain the $t$-component of the record shadow, exclude integers divisible by
one of the other $d-1$ target primes and by any prime of $B$.
Lemma~\ref{lem:moving-mask} bounds the loss caused by each excluded prime,
uniformly in that prime, by
\[
O_{K,T}\!\left(
\frac{H(\log\log H)^{K-3}}{\log H}
+\sqrt H(\log H)^2
\right)
=o(f_K(H)).
\]
There are at most $d-1+K$ excluded primes, so the total loss is still
$o(f_K(H))$ uniformly over $B$.  The surviving components for different
$t\in T$ are disjoint because each surviving integer contains exactly one
prime of $T$.  Summing over $t$ proves \eqref{eq:shadow-asymptotic}.

For the full population, apply Lemma~\ref{lem:fixed-anchors} to the fixed anchor
set $T$ at height $pH$.  Since $p$ is fixed,
\[
\log(pH)\sim\log H,
\qquad
\log\log(pH)\sim\log\log H,
\]
which gives \eqref{eq:full-asymptotic}.  Finally, $d\ge2$ implies
\[
K-d-1\le K-3<K-2,
\]
so the full population is lower order than the shadow.  This proves
\eqref{eq:surplus-asymptotic}.
\end{proof}

For $K\ge3$ and a target pair $T=\{p,q\}$ with $p<q$, the dimension gap takes
the explicit form
\begin{equation}
\frac{R_{K,\{p,q\}}(H;B)}{\Full_{K,\{p,q\}}(pH)}
\sim
\frac{p+q-2}{p(K-2)}\log\log H.
\label{eq:pair-ratio}
\end{equation}
The factors $1/(p-1)$ and $1/(q-1)$ come from the exact geometric sums over
prime-power exponents.

\section{Completion of the proof}
\label{sec:finish}

\begin{proof}[Proof of Theorem~\ref{thm:main-intro}]
Suppose, toward a contradiction, that $\operatorname{Sat}(T)$ fails for some
allowed exact support $T$.  Section~\ref{sec:defective} shows that the case
$|T|=K$ is impossible.  Hence
\[
2\le d=|T|<K.
\]

By Lemma~\ref{lem:near-height}, selected proper values are unbounded.  There
are therefore arbitrarily large proper-value records $H$ occurring after the
fixed full-return cutoff $n_0$.  At each such record,
\eqref{eq:record-necessary} gives
\begin{equation}
R_{K,T}(H;B)-\Full_{K,T}(pH)\le C_T,
\label{eq:final-upper}
\end{equation}
where $B$ is the actual two-back support and $C_T$ is the fixed number of
proper terms in the opening segment.

On the other hand, $B\cap T=\varnothing$ and $|B|\le K$, so
Theorem~\ref{thm:record-surplus} applies uniformly to the actual blocker at
every record and yields
\[
R_{K,T}(H;B)-\Full_{K,T}(pH)\longrightarrow+\infty
\]
as $H\to\infty$ through the proper records.  This contradicts
\eqref{eq:final-upper}.  Therefore $\operatorname{Sat}(T)$ holds for every
allowed support $T$.

Every allowed exact-support queue is consequently exhausted.  Hence every
integer with $2\le\omega(m)\le K$ occurs, completing the proof.
\end{proof}

\subsection{Why proper records are the effective observation points}

Two features of the $T$-tail make proper records the natural observation
points.  First, the episode grammar associates every proper tail term with the
distinct full starter immediately preceding it, giving
$N_{\mathrm{prop}}\le N_{\mathrm{full}}$.  Second,
Lemma~\ref{lem:near-height} associates every full tail term $v$ with an earlier
proper value $w(v)>v/p$.  Thus, if $H(N)$ is the running maximum of the proper
values selected by index $N$, every full term selected by then is below
$pH(N)$.

At an arbitrary proper term $H=a_n$, the running maximum $H(n)$ may be larger
than $H$, so these two counts need not be controlled at the same numerical
scale.  At a proper-value record, however, $H(n)=H$.  The local picture is then
free--full--proper, and the exact-rank-$K$ shadow candidates have novelty
automatically.  Record times therefore synchronize the historical proper count
with the full-value envelope at the common parameter $H$, producing the fixed
upper bound \eqref{eq:record-necessary}.  The analytic argument of
Section~\ref{sec:counting} shows that this synchronized upper bound cannot
persist for arbitrarily large records.

\subsection{Dependence on the finite rank cap}

The fixed cap enters in two places.  First, maximal-rank shadow candidates
cannot have their support contained in a full predecessor, giving automatic
novelty in Lemma~\ref{lem:automatic-novelty}.  Second, the two-back support has
at most $K$ primes, so Lemma~\ref{lem:moving-mask} makes its total deletion cost
uniformly lower order.  Neither statement remains uniform when the rank cap is
removed; Theorem~\ref{thm:main-intro} therefore makes no claim about the
unrestricted process.

\section*{AI disclosure}
AI tools, specifically ChatGPT, were used throughout all stages of this
research, including computational experiments, theoretical synthesis, and
multiple rounds of AI-assisted proofreading.

\clearpage
\bibliographystyle{jis}
\bibliography{references}
\end{document}